\documentclass[11pt,reqno]{amsart}

\usepackage[T1]{fontenc}
\usepackage{lmodern}
\usepackage{microtype}
\usepackage{amsmath,amssymb,amsthm,mathtools}
\usepackage{enumitem}
\usepackage{xcolor}
\usepackage[colorlinks=true,linkcolor=blue!55!black,citecolor=blue!55!black,urlcolor=blue!55!black]{hyperref}
\hypersetup{
  pdftitle={Rectangular Pegs on Jordan Curves of Finite p-Variation},
  pdfauthor={Xiangfei Li},
  pdfsubject={Square and rectangular peg problems for rough Jordan curves},
  pdfkeywords={square peg problem, rectangular peg problem, Jordan curve, p-variation, Young integral}
}

\newtheorem{theorem}{Theorem}[section]
\newtheorem{proposition}[theorem]{Proposition}
\newtheorem{lemma}[theorem]{Lemma}
\newtheorem{example}[theorem]{Example}
\newtheorem{corollary}[theorem]{Corollary}
\theoremstyle{definition}

\theoremstyle{remark}

\newcommand{\R}{\mathbb{R}}

\newcommand{\T}{\mathbb{T}}

\title[Rectangular pegs on finite-variation Jordan curves]
{Rectangular Pegs on Jordan Curves of Finite \(p\)-Variation}

\author{Xiangfei Li}
\address{Soochow Academy\\
 Suzhou 215006\\ China}
\email{xfli@amss.ac.cn}

\author{Yichen Pan}
\address{Qiuzhen College\\Tsinghua Univerisity\\ Beijing 100084\\ China}
\email{panyc26@mails.tsinghua.edu.cn}

\subjclass[2020]{Primary 51M04; Secondary 26A45, 53D35}
\keywords{Square peg problem, rectangular peg problem, Jordan curve, finite \(p\)-variation, Young integral, smooth approximation}

\begin{document}

\begin{abstract}
We prove that every planar Jordan curve of finite \(p\)-variation, with
\(1\leq p<2\), inscribes a rectangle of every prescribed similarity class.
In particular, every such curve inscribes a square.  The proof combines the
recent criterion of Asano and Ike with a
variation-controlled approximation argument.  We show that for every
\(q>p\), a Jordan curve of finite \(p\)-variation can be approximated in the
\(q\)-variation topology by smooth Jordan embeddings.  The construction uses
simple polygonal interpolants of Boedihardjo and Geng, an elementary
interpolation inequality between variation seminorms, and a
variation-controlled smoothing of polygonal embeddings.  Young integration
then gives locally uniform convergence of the associated primitives.

\end{abstract}

\maketitle

\section{Introduction}

The square peg problem, attributed to Toeplitz, asks whether every planar
Jordan curve contains the four vertices of a nondegenerate square.  The
problem remains open for arbitrary Jordan curves; see the survey of Matschke
\cite{Matschke2014}.  Positive results have traditionally imposed geometric
or analytic regularity.  Stromquist proved the conjecture for locally
monotone curves \cite{Stromquist1989}, Tao established it for certain unions
of Lipschitz graphs \cite{Tao2017}, and Greene and Lobb proved that every
smooth Jordan curve inscribes every rectangle \cite{GreeneLobb2021}.
For \(\theta\in(0,\pi)\), a \(\theta\)-rectangle means a rectangle whose
diagonals meet at angle \(\theta\); prescribing \(\theta\), up to replacing it
by \(\pi-\theta\), is equivalent to prescribing the similarity class.

Recently, Asano and Ike introduced a  criterion
which applies to continuous Jordan curves admitting what they call a
\emph{continuous Legendrian lift} \cite{AsanoIke}.  Their theorem implies the
rectangular peg conclusion for all rectifiable Jordan curves and all locally
monotone Jordan curves.  The purpose of this note is to verify their criterion
for a natural class of rough curves arising in deterministic rough-path
theory.

For a continuous path \(x\colon[0,1]\to\R^d\) and \(r\geq1\), write
\[
 \lVert x\rVert_{r\text{-var};[0,1]}
 :=\left(
 \sup_{\mathcal D}\sum_{j}|x(t_j)-x(t_{j-1})|^r
 \right)^{1/r},
\]
where the supremum ranges over all finite partitions
\(\mathcal D=\{0=t_0<\cdots<t_m=1\}\).  Our main result is the following.

\begin{theorem}\label{thm:main}
Let \(C\subset\R^2\) be a Jordan curve admitting a parametrization of finite
\(p\)-variation for some \(1\leq p<2\).  Then \(C\) inscribes a rectangle of
every prescribed similarity class.  In particular, \(C\) inscribes a square.
\end{theorem}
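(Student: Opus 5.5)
The plan is to reduce Theorem~\ref{thm:main} to the criterion of Asano and Ike \cite{AsanoIke}. That criterion says a Jordan curve \(\gamma\colon\T\to\R^2\) inscribes every \(\theta\)-rectangle once it admits a \emph{continuous Legendrian lift}. Concretely, we need the primitive
\[
 z(t)=\int_0^t x\,dy-y\,dx
\]
(or its analogue in the normalization of \cite{AsanoIke}) to make sense as a continuous function. We also need it to arise as a locally uniform limit of the corresponding primitives of smooth Jordan embeddings \(\gamma_n\to\gamma\) uniformly. For rectifiable curves this is a Riemann--Stieltjes integral. For \(p<2\) we will use Young integration instead: since \(1/p+1/p>1\), the integral \(\int x\,dy\) is defined and continuous in \(t\). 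The Young--Lo\`eve estimate
\[
 \Bigl|\int_s^t (x-x_s)\,dy\Bigr|\le C_{p,q}\lVert x\rVert_{q\text{-var}}\lVert y\rVert_{q\text{-var}}
\]
holds for any \(q\) with \(2/q>1\). Combined with bilinearity, it gives continuity of the map \(\gamma\mapsto z\) in the \(q\)-variation topology for every \(q<2\).

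\textbf{Step 1: approximation.} Fix \(q\in(p,2)\). We construct smooth Jordan embeddings \(\gamma_n\) with \(\lVert\gamma_n-\gamma\rVert_{q\text{-var}}\to0\) and \(\lVert\gamma_n-\gamma\rVert_\infty\to0\). First take the simple polygonal interpolants \(P_n\) of Boedihardjo and Geng. These are embedded polygons inscribed in \(\gamma\) at refining partitions, with \(\lVert P_n\rVert_{p\text{-var}}\le C\lVert\gamma\rVert_{p\text{-var}}\) uniformly and \(P_n\to\gamma\) uniformly. The interpolation inequality
\[
 \lVert f\rVert_{q\text{-var}}\le \lVert f\rVert_{p\text{-var}}^{p/q}\,(2\lVert f\rVert_\infty)^{1-p/q}
\]
applied to \(f=P_n-\gamma\) then yields \(q\)-variation convergence. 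Next we round the corners of each \(P_n\) at scales small compared with its edge lengths and vertex separations. This gives a smooth embedding \(\gamma_n\) that is uniformly close to \(P_n\) and has \(q\)-variation of the difference as small as desired. Corner rounding replaces short segments near vertices by arcs, so the increments it introduces are tiny in the sup norm, while the \(p\)-variation stays bounded by a constant times that of \(P_n\); the interpolation inequality again gives the estimate.

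\textbf{Step 2: convergence of the lifts.} By Young continuity, the primitives \(z_n\) of \(\gamma_n\) converge uniformly to the Young primitive \(z\) of \(\gamma\). Reparametrization invariance lets us work on a common parameter circle. The resulting lift \((\gamma,z)\) is continuous, and it is the limit of the Legendrian lifts of the smooth curves \(\gamma_n\). This is exactly the hypothesis of Asano--Ike, so every \(\theta\)-rectangle is inscribed. Taking \(\theta=\pi/2\) gives the square.

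The main obstacle is Step 1, namely keeping the approximants embedded while controlling variation. The Boedihardjo--Geng simplicity is essential, since naive inscribed polygons may self-intersect. The smoothing must also respect a positive separation between nonadjacent edges, and I would choose the rounding radius to be a fraction of that separation. If the precise Asano--Ike hypothesis instead demands a single Legendrian lift in a contact manifold (with the \(z\)-coordinate continuous), then Step 2 supplies it directly via the Young integral, and Step 1 only serves to identify it with the limit of smooth lifts.
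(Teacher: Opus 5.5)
Your proposal is correct and follows essentially the paper's route: Boedihardjo--Geng simple interpolating polygons, the interpolation inequality between sup norm and \(p\)-variation to get \(q\)-variation convergence for a fixed \(q\in(p,2)\), a variation-controlled smoothing of each polygon, Young--Lo\`eve continuity of the primitives (uniform on one period, hence locally uniform on \(\R\) by periodicity), and then the Asano--Ike criterion. The differences are cosmetic. The paper smooths by periodic mollification, which leaves the parametrization unchanged away from the vertices and gives \(\lVert S-P\rVert_{1\text{-var}}\to0\) from \(L^1\) convergence of derivatives; this settles the parametrization and \(C^\infty\) issues that your corner rounding leaves implicit. It also uses \(\lambda=y\,dx\), which is equivalent to your \(x\,dy-y\,dx\) by integration by parts.
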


The key point is not merely uniform approximation.  Smooth approximating
curves have inscribed rectangles, but those rectangles may collapse to a
point in the limit.  The Asano--Ike criterion requires convergence of
primitives of the Liouville form.  We obtain this convergence by constructing
smooth Jordan approximants in \(q\)-variation for some \(q<2\), after which
Young's integration theorem applies \cite{Young1936}.

We identify \(\T\) with \([0,1]/\{0,1\}\). The approximation statement may also be useful independently.

\begin{theorem}\label{thm:approx-intro}
Let \(\gamma\colon\T\to\R^2\) be a Jordan parametrization of finite
\(p\)-variation.  For every \(q>p\), there are smooth Jordan embeddings
\(\gamma_n\colon\T\to\R^2\) such that
\[
 |\gamma_n(0)-\gamma(0)|
 +\lVert\gamma_n-\gamma\rVert_{q\text{-var}}
 \longrightarrow0.
\]
\end{theorem}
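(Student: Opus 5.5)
We will prove Theorem~\ref{thm:approx-intro} in three steps, following the outline in the abstract: polygonal approximation, interpolation of variation norms, and smoothing of polygons.

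\textbf{Step 1: simple polygonal interpolants.} By the result of Boedihardjo and Geng, for the Jordan parametrization \(\gamma\) there are simple closed polygonal curves \(P_n\colon\T\to\R^2\) with three properties. First, \(P_n\to\gamma\) uniformly. Second, \(P_n(0)\to\gamma(0)\). Third, the \(p\)-variation is uniformly bounded: \(\sup_n\lVert P_n\rVert_{p\text{-var}}\le K\lVert\gamma\rVert_{p\text{-var}}\). We take the \(P_n\) to be obtained by joining points \(\gamma(t_j)\) along a fine partition and resolving any self-intersections, in the manner of their construction.

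\textbf{Step 2: interpolation inequality.} For \(q>p\) and any continuous path \(x\) we have
\[
 \lVert x\rVert_{q\text{-var}}\le \lVert x\rVert_{p\text{-var}}^{p/q}\,\bigl(2\lVert x\rVert_\infty\bigr)^{1-p/q}.
\]
This follows from \(|\Delta x|^q\le |\Delta x|^p(\sup|\Delta x|)^{q-p}\), summed over any partition. Apply it to \(x=P_n-\gamma\). The \(p\)-variation of \(x\) stays bounded and its sup norm tends to zero, so \(\lVert P_n-\gamma\rVert_{q\text{-var}}\to0\). One subtlety is that the parametrizations of \(P_n\) must be chosen compatibly with that of \(\gamma\): \(P_n\) should be reparametrized so that \(P_n(t_j)=\gamma(t_j)\) and is linear in between. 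The uniform \(p\)-bound must then be checked in that parametrization. This is a standard fact for piecewise-linear interpolation, since the \(p\)-variation of a linear interpolant is at most a constant times that of \(\gamma\). The check is more delicate after the Boedihardjo--Geng modifications, which is why we rely on their estimate.

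\textbf{Step 3: smoothing the polygons.} For each simple polygon \(P_n\) we construct a smooth Jordan embedding \(\gamma_n\) with \(\lVert\gamma_n-P_n\rVert_{q\text{-var}}<1/n\) and \(|\gamma_n(0)-P_n(0)|<1/n\). We round each corner inside a disc of radius \(\delta\) much smaller than half the minimal distance between nonadjacent edges, which keeps the curve simple. We then reparametrize so that the speed is smooth and nonvanishing while the time spent near each vertex matches that of \(P_n\). The difference \(\gamma_n-P_n\) is supported in small time intervals around the vertices and has sup norm \(O(\delta)\). Its \(1\)-variation is bounded by the total length of the rounded arcs plus the replaced segments, which is also \(O(\delta)\) per vertex. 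Hence its \(q\)-variation, for \(q\ge1\), is \(O(N\delta)\), where \(N\) is the number of vertices. Choosing \(\delta\) small for fixed \(n\) therefore suffices.

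Finally, the triangle inequality combines Steps 2 and 3. We expect the main obstacle to be Step 1: obtaining simple polygons with a uniform \(p\)-variation bound in a parametrization that converges uniformly to \(\gamma\). Naive inscribed polygons may self-intersect, and removing the loops changes the parametrization. Here we lean on the Boedihardjo--Geng interpolant result as a black box.
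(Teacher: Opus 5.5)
Your Step~1 has a genuine gap, and it is the step you yourself single out as the main obstacle. You describe the Boedihardjo--Geng polygons as inscribed polygons whose self-intersections have been resolved. You then defer the uniform bound $\sup_n\lVert P_n\rVert_{p\text{-var}}\le K\lVert\gamma\rVert_{p\text{-var}}$, in a parametrization uniformly close to $\gamma$, to ``their estimate''. The cited result, as the paper uses it, supplies no such estimate. For a loop-removed polygon the bound is also not automatic: such a polygon skips some of the points $\gamma(t_j)$ and acquires new vertices, so it is no longer the affine interpolant of $\gamma$ on the partition. Consequently neither your ``standard fact'' about linear interpolants nor uniform closeness in the original time parameter applies directly. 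As written, the argument rests on an unverified black box.

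The missing observation is that no modification is needed. Boedihardjo--Geng's Theorem~2.2, as quoted in the paper, says that for every $\varepsilon>0$ there is a partition of mesh less than $\varepsilon$ whose piecewise affine interpolation is \emph{itself} a Jordan curve. Take $P_n=\gamma^{\mathcal P_n}$ exactly. Then $P_n(0)=\gamma(0)$ and $\lVert P_n-\gamma\rVert_\infty\le\omega_\gamma(|\mathcal P_n|)$. Moreover $\lVert P_n\rVert_{p\text{-var}}\le\lVert\gamma\rVert_{p\text{-var}}$ with constant $1$: a partition point interior to an edge contributes $|z-a|^p+|b-z|^p$, which is convex along that edge, so it can be pushed to a vertex without decreasing the variation sum.

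With this repair, your Step~2 is the paper's argument. Your Step~3 is a valid alternative to the paper's mollification $P*\rho_\delta$. You round corners inside $\delta$-discs and reparametrize so that $\gamma_n=P_n$ outside short time windows, whence $\lVert\gamma_n-P_n\rVert_{1\text{-var}}=O(N\delta)$. In the paper, the $1$-variation bound comes from $P'*\rho_\delta\to P'$ in $L^1$. Injectivity there comes from a strictly monotone linear projection near each vertex, plus uniform convergence for well-separated parameters. Your rounding gives simplicity by direct geometry, but you must build the smooth time reparametrization of each rounded arc by hand. You should also take $\delta$ below half the shortest edge length, so that neighbouring roundings do not overlap.
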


The strict inequality \(q>p\) is essential to the proof: uniform convergence
combined with a uniform \(p\)-variation bound improves to convergence in every
larger variation exponent, but in general not in the original exponent.

\section{Variation estimates}

We begin with two elementary facts.  All variation seminorms in this section
are taken on \([0,1]\), unless another interval is displayed.

\begin{lemma}\label{lem:interpolation}
Let \(1\leq p<q<\infty\), and let \(h\colon[0,1]\to\R^d\) be continuous with
finite \(p\)-variation.  Then
\begin{equation}\label{eq:variation-interpolation}
 \lVert h\rVert_{q\text{-var}}
 \leq
 \bigl(2\lVert h\rVert_\infty\bigr)^{1-p/q}
 \lVert h\rVert_{p\text{-var}}^{p/q}.
\end{equation}
\end{lemma}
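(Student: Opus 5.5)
The plan is to argue partition by partition and then pass to the supremum. Fix a finite partition \(\mathcal D=\{0=t_0<\cdots<t_m=1\}\) of \([0,1]\), and write \(\Delta_j:=|h(t_j)-h(t_{j-1})|\) for the increments of \(h\). Each increment is bounded by the oscillation:
\[
 \Delta_j\leq|h(t_j)|+|h(t_{j-1})|\leq 2\lVert h\rVert_\infty .
\]

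The key step is to split the exponent \(q\) as \((q-p)+p\), which gives, for each \(j\),
\[
 \Delta_j^q=\Delta_j^{\,q-p}\,\Delta_j^{\,p}\leq\bigl(2\lVert h\rVert_\infty\bigr)^{q-p}\Delta_j^{\,p}.
\]
This is where the hypothesis \(q>p\) is used: it makes \(q-p\) positive, so raising the bound on \(\Delta_j\) to that power is monotone. Summing over \(j\) and bounding the resulting sum of \(p\)-th powers by the supremum over all partitions, we get
\[
 \sum_j\Delta_j^q\leq\bigl(2\lVert h\rVert_\infty\bigr)^{q-p}\lVert h\rVert_{p\text{-var}}^{p}.
\]

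The right-hand side does not depend on \(\mathcal D\). Taking the supremum over all partitions therefore bounds \(\lVert h\rVert_{q\text{-var}}^q\) by the same quantity. Raising both sides to the power \(1/q\) yields \eqref{eq:variation-interpolation}.

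There is no real obstacle here. The only points to check are that all quantities are finite and that the exponent \(q-p\) is positive. Finiteness holds because \(h\) is continuous on the compact interval \([0,1]\), so \(\lVert h\rVert_\infty<\infty\), and \(\lVert h\rVert_{p\text{-var}}<\infty\) by hypothesis. Positivity of \(q-p\) is exactly the assumption \(q>p\).
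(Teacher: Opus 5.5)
Your proof is correct and is essentially the paper's argument: the paper also fixes a partition, factors each \(q\)-th power increment into a \((q-p)\)-th power times a \(p\)-th power, bounds the former by \((2\lVert h\rVert_\infty)^{q-p}\) and the sum of the latter by \(\lVert h\rVert_{p\text{-var}}^p\), and then takes the supremum and the \(q\)-th root. The only cosmetic difference is that the paper first pulls out the maximal increment and then bounds it by \(2\lVert h\rVert_\infty\), whereas you bound each increment individually.
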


\begin{proof}
For every partition \(\mathcal D=\{t_j\}\),
\[
\begin{aligned}
 \sum_j |h(t_j)-h(t_{j-1})|^q
 &\leq
 \left(\max_j|h(t_j)-h(t_{j-1})|\right)^{q-p}
 \sum_j|h(t_j)-h(t_{j-1})|^p\\
 &\leq
 (2\lVert h\rVert_\infty)^{q-p}
 \lVert h\rVert_{p\text{-var}}^p.
\end{aligned}
\]
Taking the supremum and then the \(q\)-th root proves
\eqref{eq:variation-interpolation}.
\end{proof}

Let \(x\colon[0,1]\to\R^d\) be continuous and let
\(\mathcal P=\{0=t_0<\cdots<t_N=1\}\).  Denote by \(x^{\mathcal P}\) its
piecewise affine interpolation through the values \(x(t_i)\).

\begin{lemma}\label{lem:polygon-var}
For every \(p\geq1\),
\begin{equation}\label{eq:polygon-var}
 \lVert x^{\mathcal P}\rVert_{p\text{-var}}
 \leq \lVert x\rVert_{p\text{-var}}.
\end{equation}
Moreover, if \(\omega_x\) is the modulus of continuity of \(x\), then
\begin{equation}\label{eq:polygon-uniform}
 \lVert x^{\mathcal P}-x\rVert_\infty
 \leq \omega_x(|\mathcal P|).
\end{equation}
\end{lemma}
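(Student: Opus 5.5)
The plan is to prove the two estimates separately. For \eqref{eq:polygon-var}, fix an arbitrary partition \(\mathcal D=\{s_0<\cdots<s_m\}\) of \([0,1]\) and bound \(\sum_k|x^{\mathcal P}(s_k)-x^{\mathcal P}(s_{k-1})|^p\) by \(\lVert x\rVert_{p\text{-var}}^p\). Taking the supremum over \(\mathcal D\) then gives \eqref{eq:polygon-var}. Two reductions come first.

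\emph{Reduction to partitions inside one cell.} If several consecutive points of \(\mathcal D\) lie in a single cell \([t_{i-1},t_i]\), the path \(x^{\mathcal P}\) is affine there. The increments along those points are then collinear vectors pointing in the same direction, with lengths \(\lambda_k L\) where \(\sum\lambda_k\le 1\). Since \(p\ge1\), we have \(\sum(\lambda_kL)^p\le(\sum\lambda_kL)^p\), so merging such points never decreases the sum. Hence we may assume each open cell \((t_{i-1},t_i)\) contains at most one point of \(\mathcal D\) (other than grid points). We may also insert all grid points \(t_i\); this can only increase the sum, by the triangle inequality and the power mean inequality. So it suffices to treat the case where \(\mathcal D\) contains every \(t_i\) and at most one extra point \(s\) in each cell.

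\emph{Removing the extra points.} Fix a cell \([t_{i-1},t_i]\) containing an extra point \(s=t_{i-1}+\lambda(t_i-t_{i-1})\). Write \(\Delta=x(t_i)-x(t_{i-1})\). The two increments contribute \(\bigl(\lambda^p+(1-\lambda)^p\bigr)|\Delta|^p\le|\Delta|^p\), using \(p\ge1\). Thus deleting \(s\) does not decrease the sum. After all deletions we are left with \(\mathcal D=\mathcal P\). There \(x^{\mathcal P}(t_i)=x(t_i)\), so the sum is at most \(\lVert x\rVert_{p\text{-var}}^p\).

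The main obstacle is the first reduction: one must check carefully that the interplay between inserting grid points and merging points within a cell really yields a monotone chain of inequalities. A cleaner route that I would try first is a single inequality. For any \(a\le b\), the increment \(x^{\mathcal P}(b)-x^{\mathcal P}(a)\) is a combination of partial increments over the cells meeting \([a,b]\). One can then use the convexity of \(u\mapsto|u|^p\) together with the fact that \(x^{\mathcal P}\) restricted to a cell is a convex combination of its endpoint values. This is the standard argument of Friz--Victoir for piecewise linear approximations, and I would follow it.

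For \eqref{eq:polygon-uniform}, take \(t\in[t_{i-1},t_i]\) and write \(x^{\mathcal P}(t)=(1-\lambda)x(t_{i-1})+\lambda x(t_i)\). Then
\[
|x^{\mathcal P}(t)-x(t)|\le(1-\lambda)|x(t_{i-1})-x(t)|+\lambda|x(t_i)-x(t)|\le\omega_x(|\mathcal P|),
\]
since both \(|t-t_{i-1}|\) and \(|t_i-t|\) are at most \(|\mathcal P|\).
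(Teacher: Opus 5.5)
Your proof of \eqref{eq:polygon-uniform} is the same as the paper's and is fine. The gap is in \eqref{eq:polygon-var}, at the step ``we may also insert all grid points \(t_i\); this can only increase the sum.'' That claim is false for \(p>1\), because refining a partition can strictly decrease a \(p\)-variation sum. The triangle and power mean inequalities only give \(|a+b|^p\le 2^{p-1}(|a|^p+|b|^p)\), not \(|a+b|^p\le|a|^p+|b|^p\). For example, take \(x(t)=t\) in \(\R\) and \(\mathcal P=\{0,\tfrac12,1\}\). The partition \(\{0,1\}\) has sum \(1\), while \(\{0,\tfrac12,1\}\) has sum \(2^{1-p}<1\). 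So your chain of inequalities breaks at its first link. Your deletion step then has no justification, since it needs the neighbours of \(s\) to be exactly \(t_{i-1}\) and \(t_i\), so that the two increments are \(\lambda\Delta\) and \((1-\lambda)\Delta\). Separately, merging alone does not leave at most one point of \(\mathcal D\) in each open cell: the first and last points of \(\mathcal D\) in a cell can both be interior.

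The missing idea is to handle an interior point whose neighbours are \emph{arbitrary}, instead of first forcing them to be grid points. This is the paper's argument. Suppose \(s_k\in(t_{i-1},t_i)\), and hold \(a=x^{\mathcal P}(s_{k-1})\) and \(b=x^{\mathcal P}(s_{k+1})\) fixed. Then \(z\mapsto|z-a|^p+|b-z|^p\) is convex as \(z=x^{\mathcal P}(s_k)\) moves along the edge. So sliding \(s_k\) to an endpoint of its admissible range \([\max(s_{k-1},t_{i-1}),\min(s_{k+1},t_i)]\) does not decrease the sum. This either merges \(s_k\) with a neighbour or places it on a grid point. Iterating leaves a partition made of \emph{some} of the \(t_i\), whose sum is at most \(\lVert x\rVert_{p\text{-var}}^p\); no refinement is ever needed.

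Your fallback ``single inequality'' route is only a pointer, not an argument. If you apply Jensen to each increment separately, two points sharing a cell produce backward grid increments such as \(x(t_{i-1})-x(t_i)\). The resulting terms then do not assemble into sums over monotone partitions unless the endpoint choices for all \(s_k\) are coupled monotonically. Also, the piecewise-linear bound usually quoted from Friz--Victoir is stated with a constant \(3^{1-1/p}\), so citing it would not give \eqref{eq:polygon-var} with constant \(1\).
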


\begin{proof}
On each interval \([t_{i-1},t_i]\), the image of
\(x^{\mathcal P}\) is a line segment.  Given a partition used to compute its
\(p\)-variation, consider a partition point lying in the interior of one such
segment.  With its two neighboring sampled values fixed, its contribution is
of the form
\[
 z\longmapsto |z-a|^p+|b-z|^p.
\]
This is convex along the segment, so moving the point to one of the two
endpoints cannot decrease the sum.  Iterating, the variation sum is bounded
by one using only vertices of the interpolation.  Those vertices are values
of \(x\) at the times \(t_i\), which proves \eqref{eq:polygon-var}.

If \(t\in[t_{i-1},t_i]\) and
\(x^{\mathcal P}(t)=(1-\lambda)x(t_{i-1})+\lambda x(t_i)\), then
\[
 |x^{\mathcal P}(t)-x(t)|
 \leq
 (1-\lambda)|x(t_{i-1})-x(t)|
 +\lambda|x(t_i)-x(t)|
 \leq\omega_x(|\mathcal P|),
\]
which proves \eqref{eq:polygon-uniform}.
\end{proof}

\section{Smooth approximation through Jordan embeddings}

 The following result of
Boedihardjo and Geng is the topological input: for every Jordan
parametrization \(\gamma\colon\T\to\R^2\) and every \(\varepsilon>0\), there
is a partition of mesh less than \(\varepsilon\) whose piecewise affine
interpolation is again a Jordan curve \cite[Theorem~2.2]{BoedihardjoGeng2015}.
The fact that this is an interpolation, rather than only a uniformly close
polygonal curve, is crucial for controlling variation.

We shall also need a quantitative smoothing observation.

\begin{lemma}\label{lem:smooth-polygon}
Let \(P\colon\T\to\R^2\) be a parametrized Jordan polygon which is affine and
nonconstant on each edge.  For every \(\eta>0\), there exists a smooth Jordan
embedding \(S\colon\T\to\R^2\) such that \(S(0)=P(0)\) and
\begin{equation}\label{eq:w11-smooth}
 \lVert S-P\rVert_{1\text{-var}}<\eta.
\end{equation}
\end{lemma}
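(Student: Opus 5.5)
The plan is to round off each corner of \(P\) locally, changing the parametrization only on short time intervals around the vertex times, so that \(S-P\) is supported on a union of tiny intervals and its total variation there is small. Let the vertices be \(P(t_i)\), \(i=0,\dots,N\), with edges affine on \([t_{i-1},t_i]\). Since \(1\)-variation is the \(L^1\) norm of the derivative for absolutely continuous maps, \eqref{eq:w11-smooth} amounts to \(\int_\T |S'-P'|\,dt<\eta\). We may assume \(0\) is not a vertex time. If it is, we first rotate the parameter slightly, or simply leave a vertex at \(0\) and require the modification near it to fix the value at \(0\). The cleaner choice is to keep \(S=P\) on a neighbourhood of \(0\) whenever \(0\) lies inside an edge, and otherwise to do the corner smoothing below symmetrically, so that it passes through the vertex.

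\textbf{First, the reparametrization near vertices.} The construction has two stages, one in time and one in space. Fix \(\delta>0\) much smaller than all edge durations, and let \(\phi\colon\T\to\T\) be a smooth nondecreasing map that fixes the complement of \(\bigcup_i(t_i-\delta,t_i+\delta)\) pointwise and is flat to infinite order at each \(t_i\) (for instance \(\phi(t)=t_i+\psi((t-t_i)/\delta)\delta\) with \(\psi\) having all derivatives vanishing at \(0\)). Then \(P\circ\phi\) is \(C^\infty\) and has the same image. We cannot use it directly, however: it is not an embedding, since its derivative vanishes at the vertices, and \(\int|(P\circ\phi)'-P'|\le 2\int_{\bigcup(t_i\pm\delta)}|P'|\,(\dots)\). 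The latter is bounded by \(C\delta\max|P'|\), which is small, but \(S\) must be regular.

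\textbf{Second, cutting the corner.} So instead we cut each corner. Near \(t_i\), replace the two segments from \(P(t_i-\delta)\) to \(P(t_i)\) and from \(P(t_i)\) to \(P(t_i+\delta)\) by a smooth convex arc inside the triangle they span. The arc must be tangent to the incoming and outgoing edges to infinite order at its endpoints, and it is parametrized on \([t_i-\delta,t_i+\delta]\) with velocity matching \(P'\) to all orders at the endpoints; a standard bump-function blend of the two affine maps, followed by a small convex-curve correction, works. The length of the arc is at most \(|P(t_i-\delta)-P(t_i)|+|P(t_i+\delta)-P(t_i)|\le 2\delta\max|P'|\), and its speed can be chosen bounded by \(C\max|P'|\). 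Hence \(\int_{t_i-\delta}^{t_i+\delta}|S'-P'|\le C'\delta\), and summing over the \(N\) vertices gives \(\lVert S-P\rVert_{1\text{-var}}\le C'N\delta<\eta\) for small \(\delta\). Regularity is arranged by keeping the speed bounded below: the blended velocity \((1-\chi)v_{-}+\chi v_+\) of two non-antiparallel vectors never vanishes. Nonconsecutive edges are disjoint, and consecutive edges meet only at the vertex at an angle in \((0,2\pi)\setminus\{\pi\}\), or at a straight angle, where nothing needs to be done. For the value at \(0\), choose the cut intervals to avoid \(0\) if \(0\) is interior to an edge. If \(0=t_0\) is a vertex, first reparametrize \(P\) by a rotation of \(\T\) composed with a piecewise-affine time change, which does not affect the argument, or better, pick the arc through the vertex itself by using a symmetric rounding that passes through \(P(t_0)\). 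The first option is easiest: apply the construction to \(P(\cdot+s)\) with \(0\) off the \(\delta\)-neighbourhoods.

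\textbf{Main obstacle: injectivity.} The main difficulty is verifying that \(S\) remains injective. Each new arc lies in the triangle \(T_i\) with vertices \(P(t_i-\delta),P(t_i),P(t_i+\delta)\). By compactness, distinct non-adjacent edges of \(P\) are at positive distance \(d\), so for \(\delta\max|P'|<d/4\) each \(T_i\) meets \(P\) only in its two sides, and distinct triangles are disjoint. Within \(T_i\) the arc is a graph over the chord direction (by convexity) and meets the rest of \(S\) only at its endpoints. Consequently \(S\) is injective, and it is a smooth immersion of \(\T\), hence a smooth Jordan embedding.
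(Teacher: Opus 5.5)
Your strategy (cut each corner locally and prove injectivity by confining each modification to a small triangle \(T_i\)) can be made to work, but three steps fail as written.

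\emph{Base point.} The most concrete failure is the base point when \(0\) is a vertex time. The rotation trick does not give \(S(0)=P(0)\). If you build \(\tilde S\) for \(P(\cdot+s)\) and shift back, then \(S(0)=\tilde S(-s)\) is a point on the rounded arc, not the vertex \(P(0)\). If you do not shift back, \(\tilde S\) is not close to \(P\). No reparametrization can help, since \(S\) must pass through the vertex point at time \(0\). Your alternative, rounding ``through the vertex'', is incompatible with your triangle argument at a genuine corner. A regular \(C^1\) arc inside \(T_0\) cannot pass through the apex: its tangent \(w\) there would need both \(w\) and \(-w\) in the pointed cone spanned by \(-v_-\) and \(v_+\). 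The fix is the paper's. Ignore the base point during the construction and set \(S=\tilde S-\tilde S(0)+P(0)\) at the end. Translation preserves smooth Jordan embeddings, and the \(1\)-variation seminorm does not see constants.

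\emph{Straight vertices.} At a straight vertex with \(v_+=cv_-\), \(c>0\), \(c\neq1\), \(P\) is not \(C^1\). The hypotheses allow such vertices, so ``nothing needs to be done'' is false there.

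\emph{The corner arc.} The position blend \((1-\chi)A_-+\chi A_+\) equals \(P(t_i)+(t-t_i)\bigl((1-\chi)v_-+\chi v_+\bigr)\). It passes through the vertex and lies outside \(T_i\) wherever \(0<\chi<1\), so it is not the convex arc you need. The unspecified ``convex-curve correction'' is therefore carrying the entire construction.

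Blending velocities instead settles the last two points. On \([t_i-\delta,t_i+\delta]\) put \(S'=(1-\chi)v_-+\chi v_+\), where \(\chi\) is a smooth nondecreasing step, flat at both ends, with \(\int\chi=\delta\), so that the arc ends at \(P(t_i+\delta)\). This gives:
\begin{enumerate}[label=\textup{(\alph*)}]
 \item \(S'\neq0\), including at straight vertices;
 \item \(\det(S',S'')=\chi'\det(v_-,v_+)\) has constant sign, so the arc is convex and stays in \(T_i\);
 \item \(|S'|\leq\max|P'|\), which gives \(\lVert S-P\rVert_{1\text{-var}}\leq 4N\delta\max|P'|\);
 \item the arc is injective because \(\ell(S')>0\) for any functional \(\ell\) positive on \(v_\pm\).
\end{enumerate}
Injectivity does not come from being a graph over the chord direction, as you claim. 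That fails for sharp corners with \(|v_-|\ll|v_+|\).

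Once repaired, your proof is a localized, explicit version of the paper's. With an even kernel supported in \([-\delta,\delta]\), the paper's \(P_\delta=P*\rho_\delta\) equals \(P\) away from the vertices. Near \(t_i\) its velocity is exactly \((1-\chi)v_-+\chi v_+\) with \(\chi(t)=\int_{-\infty}^{t-t_i}\rho_\delta\), and evenness is what gives \(\int\chi=\delta\).

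The genuine difference is the injectivity proof. You confine each corner to a triangle that meets the rest of the curve only at the two junction points. The paper instead combines local injectivity from the same functional \(\ell\) with uniform convergence \(P_\delta\to P\) and the separation constant \(m_r\). The paper's soft argument needs no case distinctions (straight vertices, base point) and gets the \(1\)-variation bound from \(L^1\)-continuity of mollification. Yours gives an explicit rate and an explicit \(S\), at the price of the geometric bookkeeping above.
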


\begin{proof}
Extend \(P\) periodically to \(\R\), let \(\rho_\delta\) be a nonnegative
periodic smooth approximate identity, and put
\[
 P_\delta=P*\rho_\delta.
\]
Since \(P\in W^{1,1}(\T;\R^2)\),
\[
 P_\delta' =P'*\rho_\delta\longrightarrow P'
 \quad\text{in }L^1(\T;\R^2).
\]
Consequently,
\begin{equation}\label{eq:w11-convolution}
 \lVert P_\delta-P\rVert_{1\text{-var}}
 =\int_{\T}|P_\delta'(t)-P'(t)|\,dt
 \longrightarrow0.
\end{equation}
Also, \(P_\delta\to P\) uniformly.

We claim that \(P_\delta\) is a smooth embedding for all sufficiently small
\(\delta\).  Away from the vertices, its derivative is the constant nonzero
velocity of an edge.  At a vertex, let \(v_-\) and \(v_+\) be the two adjacent
edge velocities.  They cannot be negative scalar multiples of each other,
since that would force the adjacent edges to overlap and contradict the
injectivity of \(P\).  Hence there is a linear functional \(\ell\) with
\[
 \ell(v_-)>0,
 \qquad
 \ell(v_+)>0.
\]
If \(\delta\) is smaller than half the least parameter length of an edge, then
near this vertex \(P_\delta'\) is a nonnegative weighted average of \(v_-\) and
\(v_+\).  Thus \(\ell(P_\delta')>0\), so \(P_\delta\) is regular and locally
injective there.  The same argument on the interiors of the finitely many
edges gives a finite collection of parameter intervals on each of which a
linear projection of \(P_\delta\) is strictly monotone, uniformly for small
\(\delta\).  Therefore there exists \(r>0\) such that
\begin{equation}\label{eq:local-inj}
 P_\delta(s)\neq P_\delta(t)
 \quad\text{whenever }0<d_{\T}(s,t)<r
\end{equation}
for all sufficiently small \(\delta\).

For pairs separated in parameter, compactness and injectivity of \(P\) give
\[
 m_r:=\min_{d_{\T}(s,t)\geq r}|P(s)-P(t)|>0.
\]
Uniform convergence implies
\(\lVert P_\delta-P\rVert_\infty<m_r/3\) for small \(\delta\), and hence
\[
 |P_\delta(s)-P_\delta(t)|
 \geq m_r-2\lVert P_\delta-P\rVert_\infty>0
\]
whenever \(d_{\T}(s,t)\geq r\).  Together with
\eqref{eq:local-inj}, this proves global injectivity.

Finally define
\[
 S(t)=P_\delta(t)-P_\delta(0)+P(0).
\]
Translation preserves regularity and injectivity, while \(S(0)=P(0)\) and
\(\lVert S-P\rVert_{1\text{-var}}\) equals the left-hand side of
\eqref{eq:w11-convolution}.  Choosing \(\delta\) sufficiently small proves
the lemma.
\end{proof}

\begin{proposition}
\label{prop:smooth-approx}
Let \(\gamma\colon\T\to\R^2\) be a Jordan parametrization of finite
\(p\)-variation.  For every \(q>p\), there exist smooth Jordan embeddings
\(\gamma_n\colon\T\to\R^2\), with \(\gamma_n(0)=\gamma(0)\), such that
\begin{equation}\label{eq:qvar-convergence}
 \lVert\gamma_n-\gamma\rVert_{q\text{-var}}\longrightarrow0.
\end{equation}
\end{proposition}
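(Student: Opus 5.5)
The plan has three stages: interpolate by Boedihardjo--Geng polygons, smooth them with Lemma~\ref{lem:smooth-polygon}, and control the $q$-variation error with Lemma~\ref{lem:interpolation}.

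\emph{Polygonal interpolants.} Fix $q>p$. By \cite[Theorem~2.2]{BoedihardjoGeng2015}, for each $n$ choose a partition $\mathcal P_n$ of mesh less than $1/n$ whose piecewise affine interpolation $P_n:=\gamma^{\mathcal P_n}$ is a Jordan polygon. We may assume $0\in\mathcal P_n$, or rotate the parameter so that this holds; then $P_n(0)=\gamma(0)$. Since $\gamma$ is injective, consecutive sampled values are distinct, so $P_n$ is affine and nonconstant on each edge.

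\emph{Estimates for the interpolants.} Lemma~\ref{lem:polygon-var} gives
\[
\lVert P_n\rVert_{p\text{-var}}\le\lVert\gamma\rVert_{p\text{-var}}
\qquad\text{and}\qquad
\lVert P_n-\gamma\rVert_\infty\le\omega_\gamma(1/n)\to0 .
\]
Put $h_n=P_n-\gamma$. Because the $p$-variation is a seminorm (Minkowski on each partition sum), $\lVert h_n\rVert_{p\text{-var}}\le 2\lVert\gamma\rVert_{p\text{-var}}$. Lemma~\ref{lem:interpolation} then yields
\[
\lVert P_n-\gamma\rVert_{q\text{-var}}
\le \bigl(2\omega_\gamma(1/n)\bigr)^{1-p/q}\bigl(2\lVert\gamma\rVert_{p\text{-var}}\bigr)^{p/q}\to0 .
\]

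\emph{Smoothing.} Apply Lemma~\ref{lem:smooth-polygon} to $P_n$ with $\eta=1/n$. This gives a smooth Jordan embedding $\gamma_n$ with $\gamma_n(0)=P_n(0)=\gamma(0)$ and $\lVert\gamma_n-P_n\rVert_{1\text{-var}}<1/n$. Since $\ell^1\subset\ell^q$ on each partition sum, $\lVert\cdot\rVert_{q\text{-var}}\le\lVert\cdot\rVert_{1\text{-var}}$. The triangle inequality for the $q$-variation seminorm therefore gives $\lVert\gamma_n-\gamma\rVert_{q\text{-var}}\to0$, which is \eqref{eq:qvar-convergence}.

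The step I expect to be the main obstacle is the anchoring condition $\gamma_n(0)=\gamma(0)$. It requires $0\in\mathcal P_n$, and the cited theorem may not allow prescribing a partition point. If it does not, I would handle it as follows. Let $t^*\in\mathcal P_n$ be a partition point within $1/n$ of $0$, and let $\tau$ be an orientation-preserving homeomorphism of $\T$ with $\tau(0)=t^*$ that is close to the identity. Interpolating $\gamma\circ\tau$ at the partition $\tau^{-1}(\mathcal P_n)$ produces the same polygon reparametrized, and it is anchored at $\gamma(t^*)$. Variation seminorms are invariant under reparametrization, so it suffices to compare against $\gamma\circ\tau$. The drift $\gamma(t^*)-\gamma(0)$ tends to $0$, and a small translation leaves the $q$-variation of the difference unchanged. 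The remaining care is to arrange the comparison so that reparametrization costs nothing in the estimates above. A cleaner route is to apply the Boedihardjo--Geng construction to the loop started at parameter $0$, since its interpolation partitions can be taken to contain the basepoint.
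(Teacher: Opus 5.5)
Your proposal is correct and follows the paper's proof step for step. You use the Boedihardjo--Geng interpolants $P_n$, then Lemma~\ref{lem:polygon-var} with the seminorm triangle inequality, then Lemma~\ref{lem:interpolation} for $q$-variation convergence, then Lemma~\ref{lem:smooth-polygon} together with $\lVert\cdot\rVert_{q\text{-var}}\le\lVert\cdot\rVert_{1\text{-var}}$; your check that each edge is nonconstant is a hypothesis the paper leaves implicit.

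The anchoring worry is moot, since the paper's partitions always have $t_0=0$, so $P_n(0)=\gamma(0)$ automatically. Should you ever need your fallback, the reparametrization is not free, but its cost $\lVert\gamma\circ\tau-\gamma\rVert_{q\text{-var}}$ tends to $0$ by the same Lemma~\ref{lem:interpolation} argument (take $\tau$ to be the rotation by $t^*$).
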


\begin{proof}
By \cite[Theorem~2.2]{BoedihardjoGeng2015}, choose partitions
\(\mathcal P_n\) with \(|\mathcal P_n|\to0\) such that
\(P_n:=\gamma^{\mathcal P_n}\) is a Jordan polygon.  Lemma
\ref{lem:polygon-var} gives
\[
 \lVert P_n\rVert_{p\text{-var}}
 \leq\lVert\gamma\rVert_{p\text{-var}},
 \qquad
 \lVert P_n-\gamma\rVert_\infty
 \leq\omega_\gamma(|\mathcal P_n|)\longrightarrow0.
\]
The triangle inequality for \(p\)-variation yields
\[
 \lVert P_n-\gamma\rVert_{p\text{-var}}
 \leq2\lVert\gamma\rVert_{p\text{-var}}.
\]
By Lemma \ref{lem:interpolation},
\begin{equation}\label{eq:polygon-qvar}
 \lVert P_n-\gamma\rVert_{q\text{-var}}
 \leq
 \bigl(2\omega_\gamma(|\mathcal P_n|)\bigr)^{1-p/q}
 \bigl(2\lVert\gamma\rVert_{p\text{-var}}\bigr)^{p/q}
 \longrightarrow0.
\end{equation}

Apply Lemma \ref{lem:smooth-polygon} to obtain a smooth Jordan embedding
\(\gamma_n\), based at \(\gamma(0)\), with
\[
 \lVert\gamma_n-P_n\rVert_{1\text{-var}}<\frac1n.
\]
Since the \(q\)-variation seminorm is bounded above by the total variation,
\[
 \lVert\gamma_n-\gamma\rVert_{q\text{-var}}
 \leq\frac1n+\lVert P_n-\gamma\rVert_{q\text{-var}}
 \longrightarrow0.
\]
This proves the proposition and Theorem \ref{thm:approx-intro}.
\end{proof}

\section{Young primitives and the rectangular peg theorem}

We recall the Young--Loeve estimate in the equal-exponent form needed below.
If \(1\leq q<2\) and \(u,v\) have finite \(q\)-variation, then the Young
integral \(\int u\,dv\) exists and
\begin{equation}\label{eq:young-loeve}
 \left|
 \int_s^t u\,dv-u(s)(v(t)-v(s))
 \right|
 \leq C_q
 \lVert u\rVert_{q\text{-var};[s,t]}
 \lVert v\rVert_{q\text{-var};[s,t]}.
\end{equation}
See Young's original paper \cite{Young1936}.

\begin{lemma}\label{lem:young-stability}
Let \(1\leq q<2\), and suppose
\(c_n=(x_n,y_n)\) and \(c=(x,y)\) are continuous planar paths such that
\[
 |c_n(0)-c(0)|+
 \lVert c_n-c\rVert_{q\text{-var}}\longrightarrow0.
\]
Define
\[
 f_n(t)=\int_0^t y_n\,dx_n,
 \qquad
 f(t)=\int_0^t y\,dx.
\]
Then \(f_n\to f\) uniformly on \([0,1]\).
\end{lemma}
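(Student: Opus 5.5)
We split the difference $f_n-f$ by bilinearity of the Young integral:
\[
 f_n(t)-f(t)=\int_0^t (y_n-y)\,dx_n+\int_0^t y\,d(x_n-x).
\]
Both integrals exist by Young's theorem, since all four paths have finite $q$-variation with $q<2$. We then bound each term uniformly in $t\in[0,1]$ by applying the Young--Lo\`eve estimate \eqref{eq:young-loeve} on $[0,t]$.

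For the first term, \eqref{eq:young-loeve} with $u=y_n-y$ and $v=x_n$ gives
\[
 \Bigl|\int_0^t (y_n-y)\,dx_n\Bigr|
 \leq |y_n(0)-y(0)|\,|x_n(t)-x_n(0)|
 + C_q\lVert y_n-y\rVert_{q\text{-var}}\lVert x_n\rVert_{q\text{-var}}.
\]
Here $\lVert x_n\rVert_{q\text{-var}}\leq \lVert x\rVert_{q\text{-var}}+\lVert x_n-x\rVert_{q\text{-var}}$ is bounded uniformly in $n$. Also $|x_n(t)-x_n(0)|\leq\lVert x_n\rVert_{q\text{-var}}$ is bounded. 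Since $|y_n(0)-y(0)|\to0$ and $\lVert y_n-y\rVert_{q\text{-var}}\to0$ by hypothesis, this term tends to $0$ uniformly in $t$. Note that restricting the variation seminorm to $[0,t]$ only decreases it, so the bounds on $[0,1]$ suffice.

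For the second term, \eqref{eq:young-loeve} with $u=y$ and $v=x_n-x$ gives
\[
 \Bigl|\int_0^t y\,d(x_n-x)\Bigr|
 \leq |y(0)|\,\bigl|(x_n-x)(t)-(x_n-x)(0)\bigr|
 + C_q\lVert y\rVert_{q\text{-var}}\lVert x_n-x\rVert_{q\text{-var}}.
\]
Both pieces are at most $(|y(0)|+C_q\lVert y\rVert_{q\text{-var}})\lVert x_n-x\rVert_{q\text{-var}}\to0$. Notably, the increment of $x_n-x$ is controlled by its $q$-variation alone, so convergence of the base points is not even needed here.

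The only point requiring care, which is mild, is the bilinearity and additivity of the Young integral in the integrand and integrator. We take this from Young's construction as a limit of Riemann--Stieltjes sums, which are bilinear. We also need that $c$ itself has finite $q$-variation; this is implicit in the definition of $f$, and in the application it follows from $\lVert c\rVert_{q\text{-var}}\leq\lVert c\rVert_{p\text{-var}}$. Combining the two estimates gives $\sup_t|f_n(t)-f(t)|\to0$.
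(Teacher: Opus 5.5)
Your proposal is correct and follows the paper's proof essentially verbatim: the same bilinear splitting $\int_0^t(y_n-y)\,dx_n+\int_0^t y\,d(x_n-x)$, followed by the Young--Lo\`eve estimate on each term, uniformly in $t$. The differences are cosmetic. You keep the boundary term as $|y_n(0)-y(0)|\,|x_n(t)-x_n(0)|$, where the paper bounds it more crudely by $\lVert y_n-y\rVert_\infty\lVert x_n\rVert_{q\text{-var}}$, and you make explicit the assumption, implicit in the paper, that $c$ itself has finite $q$-variation.
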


\begin{proof}
The assumed convergence implies uniform convergence and uniform boundedness
of the \(q\)-variation norms.  By bilinearity of the Young integral,
\[
 f_n(t)-f(t)
 =\int_0^t(y_n-y)\,dx_n
  +\int_0^t y\,d(x_n-x).
\]
Applying \eqref{eq:young-loeve} to the two terms gives
\[
\begin{aligned}
 |f_n(t)-f(t)|
 \leq{}&
 \lVert y_n-y\rVert_\infty
 \lVert x_n\rVert_{q\text{-var}}
 +C_q\lVert y_n-y\rVert_{q\text{-var}}
       \lVert x_n\rVert_{q\text{-var}}\\
 &+\lVert y\rVert_\infty
 \lVert x_n-x\rVert_{q\text{-var}}
 +C_q\lVert y\rVert_{q\text{-var}}
       \lVert x_n-x\rVert_{q\text{-var}}.
\end{aligned}
\]
The right-hand side is independent of \(t\) and tends to zero.
\end{proof}

We use the following consequence of the main theorem of Asano and Ike
\cite{AsanoIke}.

\begin{theorem}[{\cite[Theorem~1.1]{AsanoIke}}]\label{thm:AI}
Let \(c\colon\T\to\R^2\) be a Jordan parametrization.  Suppose there are
smooth Jordan parametrizations \(c_n\to c\) uniformly such that, after lifting
the parameter periodically to \(\R\), primitives of \(c_n^*\lambda\) converge
locally uniformly on \(\R\), where \(\lambda=y\,dx\) is the standard
Liouville form.  Then \(c(\T)\) inscribes a rectangle of every prescribed
similarity class.
\end{theorem}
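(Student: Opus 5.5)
Since Theorem~\ref{thm:AI} is quoted from \cite{AsanoIke}, the plan is not to reprove their symplectic machinery. Instead I would check that the hypotheses listed here are exactly what their Theorem~1.1 needs, namely a \emph{continuous Legendrian lift} of \(c\). Concretely, lift the parameter to \(\R\) and let \(F_n\colon\R\to\R\) be the chosen primitives of \(c_n^*\lambda\), so \(dF_n=y_n\,dx_n\). By hypothesis \(F_n\to F\) locally uniformly for some continuous \(F\colon\R\to\R\). I would then define
\[
 \widetilde c(t)=\bigl(c(t),F(t)\bigr)\in\R^2\times\R ,
\]
viewed in the contact manifold \((\R^3,\ dz-y\,dx)\), or in its quotient by the period translation \(z\mapsto z+\mathrm{Area}\) if their definition is phrased on \(\T\).

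The key steps, in order, are as follows.

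\textbf{Step 1 (smooth Legendrian lifts).} Each \((c_n,F_n)\) is a smooth Legendrian lift of \(c_n\): its derivative annihilates \(dz-y\,dx\). Over one period, \(F_n\) increases by \(\oint_{c_n}y\,dx=\pm\mathrm{Area}(c_n)\). This quantity converges because \(F_n\) converges locally uniformly on the interval \([0,1]\) together with its translate. Hence the limit satisfies \(F(t+1)=F(t)+A\) for a constant \(A\), which is \(\mp\) the enclosed area of \(c\) by uniform convergence of the \(c_n\).

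\textbf{Step 2 (the limit is a continuous lift).} The pairs \((c_n,F_n)\) converge locally uniformly to \(\widetilde c\). This is precisely the approximation property by smooth Legendrian lifts of smooth Jordan curves that defines a continuous Legendrian lift in \cite{AsanoIke}. If their definition additionally fixes a base point or normalization, one can subtract constants from \(F_n\), and local uniform convergence is unaffected.

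\textbf{Step 3 (apply their theorem).} With the lift in hand, invoke \cite[Theorem~1.1]{AsanoIke} to conclude that \(c(\T)\) inscribes a \(\theta\)-rectangle for every \(\theta\in(0,\pi)\).

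For orientation, the underlying mechanism is as follows. For each \(c_n\), the Greene--Lobb construction \cite{GreeneLobb2021} produces inscribed \(\theta\)-rectangles. The danger in passing to the limit is that these rectangles collapse to a point. The Legendrian lift rules this out: inscribed rectangles correspond to intersection points of Lagrangian tori in \(\C^2\), and their symplectic action differences are expressed through values of \(F_n\). Local uniform convergence of \(F_n\) keeps these actions, and hence the sizes of the rectangles, bounded away from degenerate configurations.

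The main obstacle I expect is purely definitional: matching the present formulation, with primitives on \(\R\) converging locally uniformly, to the precise definition in \cite{AsanoIke}. In particular one must handle the quasi-periodicity \(F(t+1)=F(t)+A\) and any orientation convention in the Liouville form. I would first try to verify that their definition is stated exactly in terms of locally uniform limits of primitives of smooth approximants. If instead it is stated intrinsically, I would show that the limit \(\widetilde c\) satisfies it by passing to the limit in the identity \(F_n(t)-F_n(s)=\int_s^t y_n\,dx_n\).
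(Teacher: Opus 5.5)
Your proposal takes the same approach as the paper, which gives no proof of this statement and simply quotes it as a consequence of \cite[Theorem~1.1]{AsanoIke}; your reduction (package the convergent primitives as a continuous Legendrian lift, then invoke their theorem) is the same black-box use. One aside in Step~1 is wrong but never used: the period \(A\) equals the enclosed area of \(c\) by uniform convergence only when \(c(\T)\) has zero planar measure (true for finite \(p\)-variation with \(p<2\)), whereas for an Osgood curve smooth inner and outer approximants have limiting areas differing by the area of \(c(\T)\).
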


\begin{proof}[Proof of Theorem \ref{thm:main}]
Choose \(q\) with
\[
 p<q<2.
\]
Proposition \ref{prop:smooth-approx} provides smooth Jordan embeddings
\(\gamma_n=(x_n,y_n)\) converging to
\(\gamma=(x,y)\) in \(q\)-variation.  Lemma \ref{lem:young-stability} shows
that the primitives
\[
 f_n(t)=\int_0^t y_n\,dx_n
\]
converge uniformly on one period to the Young primitive
\(f(t)=\int_0^t y\,dx\).  Periodicity of the curves and convergence of the
one-period integrals imply locally uniform convergence of the lifted
primitives on \(\R\): explicitly, if \(t=k+s\), with \(k\in\mathbb Z\) and
\(s\in[0,1)\), then the normalized lift is
\(\widetilde f_n(t)=k f_n(1)+f_n(s)\).  Thus the hypotheses of Theorem
\ref{thm:AI} hold, and the desired rectangular inscriptions follow.
\end{proof}

\begin{corollary}\label{cor:holder}
Let \(\gamma\colon\T\to\R^2\) be an \(\alpha\)-H\"older Jordan
parametrization with \(1/2<\alpha\leq1\).  Then \(\gamma(\T)\) inscribes a
rectangle of every prescribed similarity class.
\end{corollary}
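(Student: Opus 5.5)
The plan is to reduce Corollary \ref{cor:holder} to Theorem \ref{thm:main} by showing that an \(\alpha\)-H\"older parametrization has finite \(p\)-variation with \(p=1/\alpha\). Since \(1/2<\alpha\leq1\), this exponent satisfies \(1\leq p<2\), which is exactly the range covered by Theorem \ref{thm:main}.

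To set this up, regard \(\gamma\) as a continuous map on \([0,1]\) with \(\gamma(0)=\gamma(1)\), the identification \(\T=[0,1]/\{0,1\}\) used throughout. The H\"older hypothesis, with constant \(K\), gives
\[
 |\gamma(t)-\gamma(s)|\leq K|t-s|^\alpha
\]
for \(s,t\in[0,1]\). If the H\"older condition is stated with respect to the circle distance \(d_{\T}\), the same bound holds with the Euclidean distance on \([0,1]\), because \(d_{\T}(s,t)\leq|t-s|\).

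Next take any partition \(\mathcal D=\{0=t_0<\cdots<t_m=1\}\) and set \(p=1/\alpha\). Then
\[
 \sum_j|\gamma(t_j)-\gamma(t_{j-1})|^{p}
 \leq K^{p}\sum_j (t_j-t_{j-1})^{\alpha p}
 = K^{p}\sum_j (t_j-t_{j-1}) = K^{p}.
\]
Taking the supremum over partitions gives \(\lVert\gamma\rVert_{p\text{-var}}\leq K<\infty\).

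Since \(\gamma\) is a Jordan parametrization of finite \(p\)-variation with \(1\leq p<2\), Theorem \ref{thm:main} applies directly, and the rectangle of every prescribed similarity class is inscribed. There is no real obstacle here. The only point to check is the endpoint case \(\alpha=1\), where \(p=1\): it is included in the hypotheses of Theorem \ref{thm:main}, and Lipschitz curves are rectifiable, so that case is also covered by Asano--Ike directly.
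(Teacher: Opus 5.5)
Your proposal is correct and matches the paper's proof: set \(p=1/\alpha\in[1,2)\), bound each \(p\)-variation sum by \(K^{p}\sum_j(t_j-t_{j-1})=K^{p}\), and invoke Theorem~\ref{thm:main}. Your remark that a H\"older bound with respect to \(d_{\T}\) implies the same bound in \(|t-s|\) is correct; the paper leaves this point implicit.
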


\begin{proof}
Set \(p=1/\alpha<2\).  For every partition \(\{t_j\}\),
\[
 \sum_j|\gamma(t_j)-\gamma(t_{j-1})|^p
 \leq [\gamma]_{C^\alpha}^p
 \sum_j|t_j-t_{j-1}|^{\alpha p}
 = [\gamma]_{C^\alpha}^p.
\]
Thus \(\gamma\) has finite \(p\)-variation, and Theorem \ref{thm:main}
applies.
\end{proof}

\section{Strictness of the finite-variation class}

Finite \(1\)-variation is equivalent to rectifiability.  For
\(1<p<2\), Theorem \ref{thm:main} also covers curves of infinite length.  The
following example shows that the finite-\(p\)-variation class is not contained
in the locally monotone class of Stromquist.

\begin{example}\label{prop:spiral}
For every \(1<p<2\), there exists a Jordan curve which
\begin{enumerate}[label=\textup{(\roman*)}]
 \item has finite \(p\)-variation;
 \item has infinite length; and
 \item is not locally monotone.
\end{enumerate} 
\end{example}

\begin{proof}
Choose
\[
 \frac1p<\alpha<1
\]
and a fixed \(\vartheta\in(0,\pi)\).  Consider the two spirals
\[
 z_\pm(u)=u^{-\alpha}e^{i(u\pm\vartheta/2)},
 \qquad 1\leq u<\infty,
\]
with \(z_\pm(\infty)=0\).  They are disjoint away from the origin: equality
of two points first forces equality of their moduli and hence equality of the
parameters, after which the phases differ by \(\vartheta\).  Join
\(z_+(1)\) to \(z_-(1)\) by the corresponding arc of the unit circle.  The
two spirals, the origin, and this outer arc form a Jordan curve \(C_{\alpha,
\vartheta}\).

To estimate variation, divide each spiral into half-open turns
\(I_k=[2\pi k,2\pi(k+1))\), omitting finitely many initial terms.  Its length
on the closure of \(I_k\) is at most \(C_\alpha k^{-\alpha}\).  Fix an
arbitrary finite partition.  The increments whose endpoints both lie in
\(I_k\) contribute at most \(C_\alpha^p k^{-\alpha p}\), since their total
length is at most the length of that turn.  Among the remaining increments,
at most one starts in each \(I_k\); such an increment has size at most
\(2(2\pi k)^{-\alpha}\), even if it crosses several turn boundaries.
Therefore, uniformly over all partitions,
\[
 \lVert z_\pm\rVert_{p\text{-var}}^p
 \leq C\sum_{k=1}^\infty k^{-\alpha p}<\infty.
\]
The outer circular arc has bounded variation, proving (i).

On the other hand,
\[
 |z_\pm'(u)|
 =u^{-\alpha}\sqrt{1+\alpha^2u^{-2}},
\]
so the length is infinite because \(\alpha<1\).  Finally, for every unit
vector of argument \(\phi\), the corresponding linear projection of either
spiral is
\[
 u^{-\alpha}\cos(u\pm\vartheta/2-\phi),
\]
which oscillates infinitely often on every tail. Hence, no linear projection
is strictly monotone in a neighborhood of the origin, and the curve is not
locally monotone in the sense of \cite{Stromquist1989}.
\end{proof}

\section{Acknowledgments}
The authors used ChatGPT 5.6 to polish the writing of the article. 




\end{document}